\newtheorem{theorem}{Theorem}
\newtheorem{lemma}{Lemma}
\begin{document}
\baselineskip=17pt

\title{\bf Consecutive square-free numbers of a special form}

\author{\bf S. I. Dimitrov}

\date{}
\maketitle

\begin{abstract}

In the present paper we prove that for any fixed $1<c<7/6$ there exist infinitely many
consecutive square-free numbers of the form $[n^c], [n^c]+1$
and we also establish an asymptotic formula in given interval.
\medskip

\end{abstract}

\section{Notations}
\indent

Let $X$ be a sufficiently large positive number. By $\varepsilon$ we denote an arbitrary small positive number,
not necessarily the same in different occurrences. We denote by $\mu(n)$ the M\"{o}bius function and by $\tau(n)$
the number of positive divisors of $n$. As usual $[t]$ and $\{t\}$ denote the integer part, respectively, the
fractional part of $t$. Let $||t||$ be the distance from $t$ to the nearest integer.
Instead of $m\equiv n\,\pmod {k}$ we write for simplicity $m\equiv n\,(k)$.
As usual $e(t)$=exp($2\pi it$). For positive $A$ and $B$ we write $A\asymp B$ instead of $B\ll A\ll B$.
Let $c$ be a real constant such that $1<c<7/6$.

Denote
\begin{align}
\label{z}
&z=X^{\frac{2c-1}{4}}\,;\\
\label{gamma}
&\gamma=1/c\,;\\
\label{psi}
&\psi(t)=\{t\}-1/2\,;\\
\label{sigma}
&\sigma=\prod\limits_{p}\left(1-\frac{2}{p^2}\right)\,.
\end{align}
\newpage
\section{Introduction and statement of the result}
\indent

The problem for the consecutive square-free numbers arises in 1932 when Carlitz \cite{Carlitz} proved that
\begin{equation}\label{Carlitz}
\sum\limits_{n\leq X}\mu^2(n)\mu^2(n+1)=\sigma X+\mathcal{O}\big(X^{\theta+\varepsilon}\big)\,,
\end{equation}
where $\theta=2/3$ and $\sigma$ is denoted by \eqref{sigma}.
Formula \eqref{Carlitz} was subsequently improved by Heath-Brown
\cite{Heath-Brown} to $\theta=7/11$ and by Reuss \cite{Reuss} to $\theta=(26+\sqrt{433})/81$.

On the other hand in 2008 Cao and Zhai \cite{Cao-Zhai2} proved that
for any fixed $1<c<149/87$  the asymptotic formula
\begin{equation}\label{Cao-Zhai}
\sum\limits_{n\leq X}\mu^2([n^c])=\frac{6}{\pi^2}X+\mathcal{O}\left(X^{1-\varepsilon}\right)
\end{equation}
holds.
Their earlier result  \cite{Cao-Zhai1} covers the narrower range $1<c<61/36$.

Define
\begin{equation}\label{Sc}
S_c(X)=\sum\limits_{X/2<n\leq X}\mu^2([n^c])\mu^2([n^c]+1)\,.
\end{equation}

We couple the theorems \eqref{Carlitz} and \eqref{Cao-Zhai} by proving
\begin{theorem}Let $1<c<7/6$. Then the asymptotic formula
\begin{equation*}
S_c(X)=\frac{1}{2}\sigma X+\mathcal{O}\left(X^{\frac{6c+1}{8}+\varepsilon}\right)\,,
\end{equation*}
holds. Here $\sigma$ is defined by \eqref{sigma}.
\end{theorem}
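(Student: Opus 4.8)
The plan is to detect the two square-free conditions by the standard identity $\mu^2(m)=\sum_{d^2\mid m}\mu(d)$ applied to $m=[n^c]$ and $m=[n^c]+1$. Since $\gcd([n^c],[n^c]+1)=1$, the product $\mu^2([n^c])\mu^2([n^c]+1)$ expands into a double sum over moduli $a,b$ that are automatically coprime, and after truncating both ranges at the level $z$ of \eqref{z} one is left with
\[
S_c(X)=\sum_{\substack{a,b\le z\\(a,b)=1}}\mu(a)\mu(b)\,N(a,b)+E,\qquad
N(a,b)=\#\bigl\{n\in(X/2,X]:a^2\mid[n^c],\ b^2\mid[n^c]+1\bigr\},
\]
where $E$ collects the contribution of the pairs with $\max(a,b)>z$. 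Since $c>1$, the map $n\mapsto[n^c]$ is strictly increasing on $(X/2,X]$, so for each $d>z$ the number of $n$ with $d^2\mid[n^c]$ is at most the number of multiples of $d^2$ up to $X^c$, i.e. $\ll X^cd^{-2}+1$; summing over $z<d\le X^{c/2}$ and using the divisor bound $\tau([n^c])\ll X^\varepsilon$ for the cross terms gives $E\ll X^{\varepsilon}\bigl(X^cz^{-1}+X^{c/2}\bigr)\ll X^{(2c+1)/4+\varepsilon}$, which is below the claimed error.

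For the main double sum, fix coprime $a,b\le z$. By the Chinese Remainder Theorem the conditions $a^2\mid[n^c]$ and $[n^c]\equiv-1\pmod{b^2}$ combine into one congruence $[n^c]\equiv\ell\pmod{a^2b^2}$ with $\ell=\ell(a,b)$, and for any modulus $q$ the relation $[n^c]\equiv\ell\pmod q$ holds precisely when $\{(n^c-\ell)/q\}\in[0,1/q)$. Writing $\mathbf 1[\{t\}<\beta]=\beta+\psi(t-\beta)-\psi(t)$ with $\psi$ as in \eqref{psi}, this gives
\[
N(a,b)=\frac{X}{2a^2b^2}+\sum_{X/2<n\le X}\left(\psi\!\left(\frac{n^c-\ell-1}{a^2b^2}\right)-\psi\!\left(\frac{n^c-\ell}{a^2b^2}\right)\right)+O(1).
\]
Substituting the first term and extending the coprime sum to all pairs, the series $\sum_{(a,b)=1}\mu(a)\mu(b)(a^2b^2)^{-1}$ is multiplicative with local factor $1-2p^{-2}$, hence equals $\sigma$ from \eqref{sigma}; this yields the main term $\tfrac12\sigma X$ with tail $O(X/z)=O(X^{(5-2c)/4})$, while the $O(1)$ terms contribute $O(z^2)$ — both below the target for $1<c<7/6$.

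It remains to bound the error
\[
\mathcal E=\sum_{\substack{a,b\le z\\(a,b)=1}}\mu(a)\mu(b)\sum_{X/2<n\le X}\left(\psi\!\left(\frac{n^c-\ell-1}{a^2b^2}\right)-\psi\!\left(\frac{n^c-\ell}{a^2b^2}\right)\right),
\]
which is the heart of the proof. Here I would invoke Vaaler's approximation, replacing each $\psi(t)$ by a trigonometric polynomial $\sum_{1\le|h|\le H}a_he(ht)$ with $|a_h|\ll|h|^{-1}$, up to an error whose Fourier support also has length $\le H$. This reduces the problem to the Weyl sums $\sum_{X/2<n\le X}e\bigl(hn^c/(a^2b^2)\bigr)$, weighted by $1/h$ and summed over $1\le h\le H$ and $a,b\le z$, plus a zero-frequency term of size $\ll Xz^2H^{-1}$. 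The Weyl sums are then treated by van der Corput's method with a suitable exponent pair, after dyadic splitting of $h,a,b$ and according to the size of the derivative $hX^{c-1}/(a^2b^2)$. The main obstacle is that the modulus $a^2b^2$ ranges up to $z^4=X^{2c-1}$, so one cannot estimate each exponential sum in isolation and sum the bounds — the averaging over $a,b$ (and over $h$) must be used, typically through Poisson summation together with a mean-value estimate for the resulting dual sums. Choosing $H$ to balance $Xz^2H^{-1}$ against this contribution forces $H$ of size about $X^{(2c+3)/8}$, and a careful accounting of all dyadic pieces should yield $\mathcal E\ll X^{(6c+1)/8+\varepsilon}$. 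Combining this with the estimates for $E$ and for the main-term tails completes the proof; note that $c<7/6$ is exactly the range in which $X^{(6c+1)/8+\varepsilon}=o(X)$, so that the asymptotic formula is non-trivial.
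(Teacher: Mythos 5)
Your setup (M\"obius expansion of both $\mu^2$ factors, CRT reduction to a single congruence $[n^c]\equiv\ell\ (a^2b^2)$, truncation at $z$, and the peripheral bounds for the truncation tail, the main-term tail and the $O(1)$ terms) is sound, but there is a genuine gap exactly at the step you yourself call the heart of the proof. Detecting $[n^c]\equiv\ell\ (q)$ with $q=a^2b^2$ via $\psi((n^c-\ell)/q)$ puts the modulus in the denominator of the phase, so after Vaaler's approximation you face $\sum_{X/2<n\le X}e(hn^c/q)$ with $q$ as large as $z^4=X^{2c-1}$. For such $q$ and small $h$ the total variation of the phase is $hX^c/q\ll hX^{1-c}=o(1)$, so the sum is genuinely of size $\asymp X$ and no pointwise exponential-sum bound can help; the second-derivative test gives $h^{1/2}X^{c/2}q^{-1/2}+h^{-1/2}X^{1-c/2}q^{1/2}$, whose second term summed over $a,b\le z$ is of order $X^{1-c/2}z^4=X^{3c/2}$, far above $X$. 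You correctly identify that the loss must be recovered by averaging over $a,b,h$ (Poisson summation plus a mean-value estimate for the dual sums), but you do not carry this out, and the sentence ``a careful accounting of all dyadic pieces should yield $\mathcal{E}\ll X^{(6c+1)/8+\varepsilon}$'' is precisely the assertion that needs proof. As written, the argument does not establish the theorem.

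The paper sidesteps this obstacle by a different decomposition worth comparing with yours. Instead of summing over $n$ and detecting a congruence on $[n^c]$, it parametrizes the values: writing $[n^c]=kd^2$ and $kd^2+1=lt^2$, the two divisibility conditions become an exact linear parametrization in $l$, and the number of $n$ with $[n^c]=kd^2$ equals $[-(kd^2)^{\gamma}]-[-(kd^2+1)^{\gamma}]$ with $\gamma=1/c$. The $\psi$-terms then lead to exponential sums $\sum_l e(h(lt^2-1)^{\gamma})$ of length $\asymp X^ct^{-2}$ with second derivative $\asymp |h|t^4X^{1-2c}$, and a single application of the van der Corput test (Lemma \ref{Korput}) gives $|h|^{1/2}X^{1/2}+|h|^{-1/2}t^{-2}X^{c-1/2}$, which sums acceptably over $h$, $d$, $t$ --- no large modulus ever appears in a denominator. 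This inversion of roles is the essential idea your proposal is missing; with it, the paper also replaces your $\max(a,b)>z$ tail by a $dt>z$ tail handled purely by divisor-counting, which is what produces the exponent $(6c+1)/8$.
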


\section{Lemmas}
\indent

\begin{lemma}\label{Korput}
Suppose that $f''(t)$ exists, is continuous on $[a,b]$ and satisfies
\begin{equation*}
f''(t)\asymp\lambda\quad(\lambda>0)\quad\mbox{for}\quad t\in[a,b]\,.
\end{equation*}
Then
\begin{equation*}
\bigg|\sum_{a<n\le b}e(f(n))\bigg|\ll(b-a)\lambda^{1/2}+\lambda^{-1/2}\,.
\end{equation*}
\end{lemma}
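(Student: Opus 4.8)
The plan is to establish this classical van der Corput second-derivative test by the route that yields exactly the stated bound, namely van der Corput's \emph{Process B}: converting the sum into a short family of exponential integrals via Poisson summation and estimating each integral by the second-derivative test for integrals. Since $f''$ is continuous and $f''\asymp\lambda$ keeps a fixed sign, $f'$ is strictly monotone on $[a,b]$; write $[\alpha,\beta]$ for the interval swept out by $f'$, so that $\beta-\alpha=\int_a^b|f''(t)|\,dt\asymp(b-a)\lambda$. I may assume $b-a\ge1$ and $\lambda\le1$: if $b-a<1$ the sum has at most one term and the statement is immediate, while if $\lambda\ge1$ the trivial bound $\bigl|\sum\bigr|\le b-a+1\ll(b-a)\lambda^{1/2}$ already suffices.

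The workhorse is the integral estimate: if $g$ is real with $g''$ continuous and $g''\asymp\lambda$ of fixed sign on a subinterval of $[a,b]$, then $\bigl|\int_a^b e(g(t))\,dt\bigr|\ll\lambda^{-1/2}$. To prove it I would isolate the unique stationary point $t_0$ (where $g'(t_0)=0$, if it lies in the interval) and bound the contribution of the range $|t-t_0|\le\lambda^{-1/2}$ trivially by its length $\ll\lambda^{-1/2}$. On the complementary range monotonicity gives $|g'(t)|=|g'(t)-g'(t_0)|\asymp\lambda|t-t_0|\gg\lambda^{1/2}$, so writing $e(g)=\frac{1}{2\pi i\,g'}\frac{d}{dt}e(g)$ and integrating by parts produces a boundary term $\ll|g'|^{-1}\ll\lambda^{-1/2}$ together with $\int|g''|/g'^2\,dt$, which telescopes by the monotonicity of $g'$ to $\ll\lambda^{-1/2}$. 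Adding the pieces gives the claim.

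Next I would apply the truncated Poisson (sum-to-integral) formula in the form
\[
\sum_{a<n\le b}e(f(n))=\sum_{\substack{\nu\in\mathbb{Z}\\ \alpha-1<\nu<\beta+1}}\int_a^b e\bigl(f(t)-\nu t\bigr)\,dt+O\bigl(\log(2+\beta-\alpha)\bigr).
\]
For each admissible $\nu$ the phase $g_\nu(t)=f(t)-\nu t$ satisfies $g_\nu''=f''\asymp\lambda$, so the integral estimate of the previous step bounds it by $\ll\lambda^{-1/2}$. The number of integers $\nu$ with $\alpha-1<\nu<\beta+1$ is $\ll(\beta-\alpha)+1\asymp(b-a)\lambda+1$. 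Multiplying, and absorbing the logarithmic error in the regime $\lambda\le1$, $b-a\ge1$ (where $\log(2+(b-a)\lambda)\ll(b-a)\lambda^{1/2}+\lambda^{-1/2}$), yields
\[
\Bigl|\sum_{a<n\le b}e(f(n))\Bigr|\ll\bigl((b-a)\lambda+1\bigr)\lambda^{-1/2}\ll(b-a)\lambda^{1/2}+\lambda^{-1/2},
\]
which is the asserted bound.

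The main obstacle is the truncated Poisson formula itself, since we are given only that $f''$ is continuous (no control on $f'''$). I would derive it from ordinary Poisson summation over $(a,b]$ after smoothing the endpoints, using the first-derivative test to show that the frequencies $\nu$ lying outside $[\alpha-1,\beta+1]$ contribute negligibly: for such $\nu$ the quantity $|f'(t)-\nu|$ is bounded away from $0$ uniformly in $t$, so a single integration by parts (as in the integral step, but now with $g_\nu'$ of fixed sign and bounded below) makes each of these integrals small and their sum convergent. Controlling the boundary/truncation terms involving $\psi$ and verifying the $O(\log(2+\beta-\alpha))$ accounting is the delicate part; everything else is routine. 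A fully elementary alternative, avoiding Poisson, is to partition $(a,b]$ according to the nearest integer to $f'(t)$ and, on each unit piece, split at the threshold $\|f'\|=\lambda^{1/2}$, applying the elementary Kusmin--Landau first-derivative estimate where $\|f'\|\ge\lambda^{1/2}$ and a trivial length bound on the short near-integer part; optimizing the threshold reproduces the same bound in the range $\lambda\le1$.
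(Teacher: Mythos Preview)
The paper does not actually prove this lemma; its entire proof is the citation ``See \cite{Titchmarsh}, Ch.~5, Th.~5.9.'' Your sketch is correct and is, in fact, essentially the argument that sits behind that citation: van der Corput's Process~B, in which one applies a truncated Poisson summation (with the first-derivative test disposing of frequencies $\nu\notin[\alpha-1,\beta+1]$) and then bounds each of the $\ll(b-a)\lambda+1$ surviving integrals by the second-derivative oscillatory-integral estimate $\ll\lambda^{-1/2}$. Your reductions to $b-a\ge1$, $\lambda\le1$ and your absorption of the $O(\log(2+\beta-\alpha))$ error are both handled correctly. The Kusmin--Landau alternative you outline at the end is also a standard and valid route to the same bound; either would be acceptable here, and since the paper is content with a bare reference, you have in fact supplied more than the paper does.
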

\begin{proof}
See (\cite{Titchmarsh}, Ch.5, Th.5.9).
\end{proof}

\begin{lemma}\label{tauest}
Let $n\in \mathbb{N}$. Then
\begin{equation*}
\tau(n)\ll n^\varepsilon\,.
\end{equation*}
\end{lemma}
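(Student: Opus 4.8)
The plan is to reduce the bound to a statement about each prime power in the factorization of $n$, exploiting that both $\tau$ and $n\mapsto n^{\varepsilon}$ are multiplicative. Writing $n=\prod_{i=1}^{r}p_i^{a_i}$ with distinct primes $p_i$ and exponents $a_i\ge 1$, one has $\tau(n)=\prod_{i=1}^{r}(a_i+1)$ and $n^{\varepsilon}=\prod_{i=1}^{r}p_i^{a_i\varepsilon}$, so it suffices to show that the product $\prod_{i=1}^{r}\dfrac{a_i+1}{p_i^{a_i\varepsilon}}$ is bounded by a constant depending only on $\varepsilon$.

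I would then split the primes dividing $n$ according to whether $p\ge 2^{1/\varepsilon}$ or $p<2^{1/\varepsilon}$. In the first case $p^{a\varepsilon}\ge 2^{a}\ge a+1$ for every integer $a\ge 1$, so each such factor is at most $1$ and contributes nothing. In the second case there are fewer than $2^{1/\varepsilon}$ primes involved, and for each of them the corresponding factor is at most $C(\varepsilon):=\max_{a\ge 0}(a+1)2^{-a\varepsilon}$, which is finite since $(a+1)2^{-a\varepsilon}\to 0$ as $a\to\infty$. Hence the whole product is at most $C(\varepsilon)^{2^{1/\varepsilon}}$, a constant, and this gives $\tau(n)\ll_{\varepsilon}n^{\varepsilon}$.

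I do not expect any genuine obstacle here: the argument is entirely elementary, the only mildly delicate points being the inequality $2^{a}\ge a+1$ and the finiteness of $\max_{a\ge 0}(a+1)2^{-a\varepsilon}$, both of which follow from a one-line induction or an easy calculus estimate. An equally routine alternative would be to invoke the classical maximal-order bound $\tau(n)\le\exp\!\big(O(\log n/\log\log n)\big)=n^{o(1)}$, but the prime-by-prime comparison above is the cleanest way to reach the stated form directly, and it is the version I would include.
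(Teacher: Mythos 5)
Your proof is correct: the multiplicative reduction, the split of primes at $2^{1/\varepsilon}$, and the bounds $2^{a}\ge a+1$ and $\max_{a\ge 0}(a+1)2^{-a\varepsilon}<\infty$ together give $\tau(n)\ll_{\varepsilon}n^{\varepsilon}$ with the constant $C(\varepsilon)^{2^{1/\varepsilon}}$ exactly as you describe. The paper states this standard divisor bound without proof, so there is nothing to compare against; your argument is the classical one and would serve as a complete justification.
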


\section{Proof of the Theorem}
\indent

We use \eqref{Sc} and the well-known identity $\mu^2(n)=\sum_{d^2|n}\mu(d)$ to write

\begin{align}\label{Scest1}
S_c(X)&=\sum\limits_{X/2<n\leq X}\sum\limits_{d^2|[n^c]}\mu(d)\sum\limits_{t^2|[n^c]+1}\mu(t)\nonumber\\
&=\sum\limits_{d,t\atop{(d,t)=1}}\mu(d)\mu(t)\sum\limits_{X/2<n\leq X\atop{[n^c]\equiv0\,(d^2)
\atop{[n^c]+1\equiv0\,(t^2)}}}1\nonumber\\
&=\sum\limits_{d,t\atop{(d,t)=1}}\mu(d)\mu(t)\sum\limits_{((X/2)^c-1)d^{-2}<k\leq X^cd^{-2}\atop{kd^2+1\equiv0\,(t^2)}}
\sum\limits_{X/2<n\leq X\atop{[n^c]=kd^2}}1\nonumber\\
&=\sum\limits_{d,t\atop{(d,t)=1}}\mu(d)\mu(t)\sum\limits_{((X/2)^c-1)d^{-2}<k\leq X^cd^{-2}\atop{kd^2+1\equiv0\,(t^2)}}
\sum\limits_{X/2<n\leq X\atop{kd^2\leq n^c<kd^2+1}}1\nonumber\\
&=\sum\limits_{d,t\atop{(d,t)=1}}\mu(d)\mu(t)\sum\limits_{(X/2)^cd^{-2}<k\leq X^cd^{-2}\atop{kd^2+1\equiv0\,(t^2)}}
\sum\limits_{(kd^2)^\gamma\leq n<(kd^2+1)^\gamma}1+\mathcal{O}(X^\varepsilon)\nonumber\\
&=S^{(1)}_c(X)+S^{(2)}_c(X)+\mathcal{O}(X^\varepsilon)\,,
\end{align}
where
\begin{equation}\label{Sc1}
S^{(1)}_c(X)=\sum\limits_{dt\leq z\atop{(d,t)=1}}\mu(d)\mu(t)
\sum\limits_{(X/2)^cd^{-2}<k\leq X^cd^{-2}\atop{kd^2+1\equiv0\,(t^2)}}
\sum\limits_{(kd^2)^\gamma\leq n<(kd^2+1)^\gamma}1\,,
\end{equation}
\begin{equation}\label{Sc2}
S^{(2)}_c(X)=\sum\limits_{dt>z\atop{(d,t)=1}}\mu(d)\mu(t)
\sum\limits_{(X/2)^cd^{-2}<k\leq X^cd^{-2}\atop{kd^2+1\equiv0\,(t^2)}}
\sum\limits_{(kd^2)^\gamma\leq n<(kd^2+1)^\gamma}1\,.
\end{equation}
\textbf{Estimation of} $\mathbf{S^{(1)}_c(X)}$

From \eqref{Sc1} we have
\begin{align}\label{Sc1est1}
S^{(1)}_c(X)&=\sum\limits_{dt\leq z\atop{(d,t)=1}}\mu(d)\mu(t)
\sum\limits_{(X/2)^cd^{-2}<k\leq X^cd^{-2}\atop{kd^2+1\equiv0\,(t^2)}}
\big([-(kd^2)^\gamma]-[-(kd^2+1)^\gamma]\big)\nonumber\\
&=S^{(3)}_c(X)+S^{(4)}_c(X)\,,
\end{align}
where
\begin{align}
\label{Sc3}
S^{(3)}_c(X)&=\sum\limits_{dt\leq z\atop{(d,t)=1}}\mu(d)\mu(t)
\sum\limits_{(X/2)^cd^{-2}<k\leq X^cd^{-2}\atop{kd^2+1\equiv0\,(t^2)}}
\Big((kd^2+1)^\gamma-(kd^2)^\gamma\Big)\,,\\
\label{Sc4}
S^{(4)}_c(X)&=\sum\limits_{dt\leq z\atop{(d,t)=1}}\mu(d)\mu(t)
\sum\limits_{(X/2)^cd^{-2}<k\leq X^cd^{-2}\atop{kd^2+1\equiv0\,(t^2)}}
\Big(\psi(-(kd^2+1)^\gamma)-\psi(-(kd^2)^\gamma)\Big)\,.
\end{align}

First we shall estimate $S^{(3)}_c(X)$.\\
Using \eqref{Sc3} and Abel's transformation we obtain
\begin{align}\label{Sc3est1}
S^{(3)}_c(X)&=\sum\limits_{dt\leq z\atop{(d,t)=1}}\mu(d)\mu(t)
\sum\limits_{(X/2)^cd^{-2}<k\leq X^cd^{-2}\atop{kd^2+1\equiv0\,(t^2)}}
(kd^2)^\gamma\Big(\gamma(kd^2)^{-1}+\mathcal{O}\big((kd^2)^{-2}\big)\Big)\nonumber\\
&=\gamma\sum\limits_{dt\leq z\atop{(d,t)=1}}\mu(d)\mu(t)d^{2(\gamma-1)}
\sum\limits_{(X/2)^cd^{-2}<k\leq X^cd^{-2}\atop{kd^2+1\equiv0\,(t^2)}}k^{\gamma-1}\nonumber\\
&+\mathcal{O}\Bigg(\sum\limits_{dt\leq z\atop{(d,t)=1}}d^{2(\gamma-2)}
\sum\limits_{(X/2)^cd^{-2}<k\leq X^cd^{-2}\atop{kd^2+1\equiv0\,(t^2)}}k^{\gamma-2}\Bigg)\nonumber\\
&=\gamma\sum\limits_{dt\leq z\atop{(d,t)=1}}\mu(d)\mu(t)d^{2(\gamma-1)}\Bigg[(X^cd^{-2})^{\gamma-1}
\sum\limits_{(X/2)^cd^{-2}<k\leq X^cd^{-2}\atop{kd^2+1\equiv0\,(t^2)}}1\nonumber\\
&-\int\limits_{(X/2)^cd^{-2}}^{X^cd^{-2}}\Bigg(\sum\limits_{(X/2)^cd^{-2}<k\leq y\atop{kd^2+1\equiv0\,(t^2)}}1\Bigg)
(y^{\gamma-1})'dy\Bigg]+\mathcal{O}\Bigg(X^{1-2c}\sum\limits_{dt\leq z\atop{(d,t)=1}}
\sum\limits_{(X/2)^cd^{-2}<k\leq X^cd^{-2}\atop{kd^2+1\equiv0\,(t^2)}}1\Bigg)\nonumber\\
&=\gamma\sum\limits_{dt\leq z\atop{(d,t)=1}}\mu(d)\mu(t)d^{2(\gamma-1)}\Bigg[X^{1-c}d^{2(1-\gamma)}
\Bigg(\frac{2^c-1}{2^c}\frac{X^c}{d^2t^2}+\mathcal{O}(1)\Bigg)\nonumber\\
&-\int\limits_{(X/2)^cd^{-2}}^{X^cd^{-2}}\Bigg(\frac{y-(X/2)^cd^{-2}}{t^2}+\mathcal{O}(1)\Bigg)
(y^{\gamma-1})'dy\Bigg]+\mathcal{O}\Bigg(X^{1-c}\sum\limits_{dt\leq z}\frac{1}{d^2t^2}\Bigg)\nonumber\\
&=\frac{1}{2}X\sum\limits_{dt\leq z\atop{(d,t)=1}}\frac{\mu(d)\mu(t)}{d^2t^2}
+\mathcal{O}\Bigg(X^{1-c}\sum\limits_{dt\leq z}1\Bigg)+\mathcal{O}(1)\nonumber\\
&=\frac{1}{2}X\sum\limits_{d,t=1\atop{(d,t)=1}}\frac{\mu(d)\mu(t)}{d^2t^2}
-\frac{1}{2}X\sum\limits_{dt>z\atop{(d,t)=1}}\frac{\mu(d)\mu(t)}{d^2t^2}
+\mathcal{O}\Bigg(X^{1-c}\sum\limits_{dt\leq z}1\Bigg)+\mathcal{O}(1)\,.
\end{align}
It is well-known that
\begin{equation}\label{sum1}
\sum\limits_{d,t=1\atop{(d,t)=1}}\frac{\mu(d)\mu(t)}{d^2t^2}=\prod\limits_{p}\left(1-\frac{2}{p^2}\right)\,,
\end{equation}
see (\cite{Tolev}, Theorem 2.6.8).\\
On the other hand by Lemma \ref{tauest}
\begin{equation}\label{sum2}
\sum\limits_{dt>z\atop{(d,t)=1}}\frac{\mu(d)\mu(t)}{d^2t^2}\ll\sum\limits_{dt>z}\frac{1}{d^2t^2}
=\sum\limits_{n>z}\frac{\tau(n)}{n^2}\ll\sum\limits_{n>z}\frac{1}{n^{2-\varepsilon}}\ll z^{\varepsilon-1}\,.
\end{equation}
By the same way
\begin{equation}\label{sum3}
\sum\limits_{dt\leq z}1=\sum\limits_{n\leq z}\tau(n)\ll zX^\varepsilon\,.
\end{equation}
Bearing in mind \eqref{z}, \eqref{sigma} and \eqref{Sc3est1} -- \eqref{sum3} we find
\begin{equation}\label{Sc3est2}
S^{(3)}_c(X)=\frac{1}{2}\sigma X+\mathcal{O}\left(X^{\frac{6c+1}{8}+\varepsilon}\right)\,.
\end{equation}

Now we shall estimate $S^{(4)}_c(X)$.\\
Replace
\begin{equation}\label{Phi}
\Phi(k,d)=\psi(-(kd^2+1)^\gamma)-\psi(-(kd^2)^\gamma)\,.
\end{equation}
Let $M\geq2$ is a real parameter, we shall choose latter depending on $X,d$ and $t$.
Using (\cite{Tolev}, Lemma 5.2.2) we get
\begin{equation}\label{Phisum}
\Phi(k,d)=-\frac{1}{2\pi i}\sum\limits_{1\leq|h|\leq M}\frac{\omega(k,d,h)}{h}
+\mathcal{O}\big(\omega_1(k,d)\big)+\mathcal{O}\big(\omega_2(k,d)\big)\,,
\end{equation}
where
\begin{equation}\label{omega}
\omega(k,d,h)=e(-h(kd^2+1)^\gamma)-e(-h(kd^2)^\gamma)\,,
\end{equation}
\begin{equation}\label{omega12}
\omega_1(k,d)=\min\left(1,\frac{1}{M||-(kd^2+1)^\gamma||}\right)\,,
\quad \omega_2(k,d)=\min\left(1,\frac{1}{M||-(kd^2)^\gamma||}\right)\,.
\end{equation}
From \eqref{Sc4}, \eqref{Phi}, \eqref{Phisum} and \eqref{omega12} it follows
\begin{equation}\label{Sc4est1}
S^{(4)}_c(X)=-\frac{1}{2\pi i}\sum\limits_{dt\leq z\atop{(d,t)=1}}\mu(d)\mu(t)
\sum\limits_{1\leq|h|\leq M}\frac{1}{h}
\sum\limits_{(X/2)^cd^{-2}<k\leq X^cd^{-2}\atop{kd^2+1\equiv0\,(t^2)}}\omega(k,d,h)
+\Omega_1+\Omega_2\,,
\end{equation}
where $\Omega_1$ and $\Omega_2$ are the contributions of the remainder terms in \eqref{Phisum}.

Using (\cite{Tolev}, Lemma 5.2.3) we obtain
\begin{align}\label{Omega12est1}
\Omega_1,\,\Omega_2&\ll\sum\limits_{dt\leq z\atop{(d,t)=1}}
\sum\limits_{(X/2)^cd^{-2}<k\leq X^cd^{-2}+1/d^2\atop{kd^2+1\equiv0\,(t^2)}}
\min\left(1,\frac{1}{M||(kd^2)^\gamma||}\right)\nonumber\\
&\ll\sum\limits_{dt\leq z\atop{(d,t)=1}}
\sum\limits_{(X/2)^cd^{-2}<k\leq X^cd^{-2}+1/d^2\atop{kd^2+1\equiv0\,(t^2)}}
\sum\limits_{h\in\mathbb{Z}}b_M(h)e(h(kd^2)^\gamma)\,,
\end{align}
where
\begin{equation}\label{bM}
b_M(h)\ll
\begin{cases}
\frac{\log M}{M}  &  \text{ if }\;    h\in\mathbb{Z} \,, \\
\frac{M}{h^2}  &  \text{ if }\;    h\in\mathbb{Z}\backslash\{0\} \,.
\end{cases}
\end{equation}
From \eqref{Omega12est1} and \eqref{bM} we find
\begin{equation}\label{Omega12est2}
\Omega_1,\,\Omega_2\ll\sum\limits_{dt\leq z}\frac{X^c}{d^2t^2}\frac{\log M}{M}
+\sum\limits_{dt\leq z}\frac{\log M}{M}\sum\limits_{1\leq|h|\leq M}|H(t,h)|
+\sum\limits_{dt\leq z}M\sum\limits_{|h|>M}\frac{|H(t,h)|}{h^2}\,,
\end{equation}
where
\begin{equation}\label{H}
H(t,h)=\sum\limits_{((X/2)^c+1)t^{-2}<l\leq(X^c+2)t^{-2}}e(h(lt^2-1)^\gamma)\,.
\end{equation}
Denote
\begin{equation*}
f(y)=h(yt^2-1)^\gamma\,.
\end{equation*}
We have
\begin{equation}\label{fm}
f''(y)\asymp|h|t^4X^{1-2c}
\end{equation}
for
\begin{equation*}
y\in\left(\frac{(X/2)^c+1}{t^2},\,\frac{X^c+2}{t^2}\right]\,.
\end{equation*}
Bearing in mind \eqref{H}, \eqref{fm} and Lemma \ref{Korput} we get
\begin{equation}\label{Hest}
H(t,h)\ll|h|^{1/2}X^{1/2}+|h|^{-1/2}t^{-2}X^{c-1/2}\,.
\end{equation}
Taking
\begin{equation}\label{M}
M=\frac{X^{\frac{2c-1}{4}}\log X}{dt}\,,
\end{equation}
by \eqref{z}, \eqref{Omega12est2}, \eqref{Hest} and Lemma \ref{tauest} we obtain
\begin{align}\label{Omega12est3}
\Omega_1,\,\Omega_2&\ll\sum\limits_{dt\leq z}\frac{X^c}{d^2t^2}\frac{\log M}{M}
+\sum\limits_{dt\leq z}\frac{\log M}{M}
\sum\limits_{h\leq M}\big(h^{1/2}X^{1/2}+h^{-1/2}t^{-2}X^{c-1/2}\big)\nonumber\\
&+\sum\limits_{dt\leq z}M\sum\limits_{h>M}\big(h^{-3/2}X^{1/2}+h^{-5/2}t^{-2}X^{c-1/2}\big)\nonumber\\
&\ll X^\varepsilon\sum\limits_{dt\leq z}\left(\frac{X^c}{Md^2t^2}
+M^{1/2}X^{1/2}+M^{-1/2}t^{-2}X^{c-1/2}\right)\nonumber\\
&\ll X^{\frac{2c+1}{4}+\varepsilon}\sum\limits_{dt\leq z}(dt)^{-1}
+X^{\frac{2c+3}{8}+\varepsilon}\sum\limits_{dt\leq z}(dt)^{-\frac{1}{2}}
+X^{\frac{6c-3}{8}+\varepsilon}\sum\limits_{dt\leq z}(dt)^{\frac{1}{2}}\nonumber\\
&=X^{\frac{2c+1}{4}+\varepsilon}\sum\limits_{n\leq z}\tau(n)n^{-1}
+X^{\frac{2c+3}{8}+\varepsilon}\sum\limits_{n\leq z}\tau(n)n^{-\frac{1}{2}}
+X^{\frac{6c-3}{8}+\varepsilon}\sum\limits_{n\leq z}\tau(n)n^{\frac{1}{2}}\nonumber\\
&\ll X^{\frac{2c+1}{4}+\varepsilon}\sum\limits_{n\leq z}n^{-1}
+X^{\frac{2c+3}{8}+\varepsilon}\sum\limits_{n\leq z}n^{-\frac{1}{2}}
+X^{\frac{6c-3}{8}+\varepsilon}\sum\limits_{n\leq z}n^{\frac{1}{2}}\nonumber\\
&\ll X^{\frac{2c+1}{4}+\varepsilon}
+X^{\frac{2c+3}{8}+\varepsilon}z^{\frac{1}{2}}
+X^{\frac{6c-3}{8}+\varepsilon}z^{\frac{3}{2}}\nonumber\\
&\ll X^{\frac{6c+1}{8}+\varepsilon}\,.
\end{align}
From \eqref{Sc4est1} and \eqref{Omega12est3} it follows
\begin{equation}\label{Sc4est2}
S^{(4)}_c(X)\ll\sum\limits_{dt\leq z}\sum\limits_{h\leq M}\frac{1}{h}
|S^{(5)}_c(X)|+X^{\frac{6c+1}{8}+\varepsilon}\,,
\end{equation}
where
\begin{equation}\label{Sc5}
S^{(5)}_c(X)=\sum\limits_{(X/2)^cd^{-2}<k\leq
X^cd^{-2}\atop{kd^2+1\equiv0\,(t^2)}}\omega(k,d,h)\,.
\end{equation}
By \eqref{omega} we have
\begin{equation}\label{omegatheta}
\omega(k,d,h)=\theta_h(kd^2)e(-h(kd^2)^\gamma)\,,
\end{equation}
where
\begin{equation}\label{theta}
\theta_h(t)=e(h(t^\gamma-(t+1)^\gamma))-1\,.
\end{equation}
Using \eqref{Sc5}, \eqref{omegatheta} and Abel's transformation we find
\begin{align}\label{Sc5est1}
S^{(5)}_c(X)&=\sum\limits_{(X/2)^cd^{-2}<k\leq X^cd^{-2}\atop{kd^2+1\equiv0\,(t^2)}}
\theta_h(kd^2)e(-h(kd^2)^\gamma)\nonumber\\
&=\theta_h(X^c)\sum\limits_{(X/2)^cd^{-2}<k\leq X^cd^{-2}\atop{kd^2+1\equiv0\,(t^2)}}e(-h(kd^2)^\gamma)\nonumber\\
&-\int\limits_{(X/2)^cd^{-2}}^{X^cd^{-2}}(\theta_h(yd^2))'\sum\limits_{(X/2)^cd^{-2}<k\leq y\atop{kd^2+1\equiv0\,(t^2)}}e(-h(kd^2)^\gamma)dy\nonumber\\
&\ll\Bigg(|\theta_h(X^c)|+d^2\int\limits_{(X/2)^cd^{-2}}^{X^cd^{-2}}|\theta'_h(yd^2)|dy\Bigg)
\max\limits_{y\in[(X/2)^cd^{-2},X^cd^{-2}]}|S^{(6)}_c(X,y)|\,,
\end{align}
where
\begin{equation}\label{Sc6}
S^{(6)}_c(X,y)=\sum\limits_{(X/2)^cd^{-2}<k\leq y\atop{kd^2+1\equiv0\,(t^2)}}e(h(kd^2)^\gamma)\,.
\end{equation}

Consider $\theta_h(X^c)$. By \eqref{theta} and the mean-value theorem we get
\begin{equation}\label{thetaest1}
|\theta_h(X^c)|\leq2|\sin\big(\pi h((X^c)^\gamma-(X^c+1)^\gamma)\big)|\ll
|h|\big|(X^c)^\gamma-(X^c+1)^\gamma\big|\ll|h|X^{1-c}\,.
\end{equation}
On the other hand
\begin{equation*}
\theta'_h(t)=2\pi i h\gamma\big(t^{\gamma-1}-(t+1)^{\gamma-1}\big)e\big(h(t^\gamma-(t+1)^\gamma)\big)
\ll|h|t^{\gamma-2}
\end{equation*}
therefore
\begin{equation}\label{thetaest2}
\theta'_h(t)\ll|h|X^{1-2c}\quad\mbox{ for }\quad t\in[(X/2)^c,X^c] \,.
\end{equation}
Bearing in mind \eqref{Sc5est1} -- \eqref{thetaest2} we obtain
\begin{equation}\label{Sc5est2}
S^{(5)}_c(X)\ll|h|X^{1-c}\max\limits_{y\in[(X/2)^cd^{-2},X^cd^{-2}]}|S^{(6)}_c(X,y)|\,.
\end{equation}
Thus from \eqref{Sc4est2}, \eqref{Sc6} and \eqref{Sc5est2} it follows
\begin{equation}\label{Sc4est3}
S^{(4)}_c(X)\ll X^{1-c}\sum\limits_{dt\leq z}\sum\limits_{h\leq M}
\max\limits_{y\in[(X/2)^cd^{-2},X^cd^{-2}]}|S^{(6)}_c(X,y)|+X^{\frac{6c+1}{8}+\varepsilon}\,.
\end{equation}
By \eqref{Sc6} we have
\begin{equation*}
S^{(6)}_c(X,y)=\sum\limits_{((X/2)^c+1)t^{-2}<l\leq(yd^2+2)t^{-2}}e(h(lt^2-1)^\gamma)
\end{equation*}
and arguing as in \eqref{H} we find
\begin{equation}\label{Sc6est}
\max\limits_{y\in[(X/2)^cd^{-2},X^cd^{-2}]}|S^{(6)}_c(X,y)|
\ll|h|^{1/2}X^{1/2}+|h|^{-1/2}t^{-2}X^{c-1/2}\,.
\end{equation}
Using \eqref{z}, \eqref{M}, \eqref{Sc4est3}, \eqref{Sc6est}
and Lemma \ref{tauest} we get
\begin{align}\label{Sc4est4}
S^{(4)}_c(X)&\ll X^{1-c}\sum\limits_{dt\leq z}\sum\limits_{h\leq M}(h^{\frac{1}{2}}X^{1/2}
+h^{-1/2}t^{-2}X^{c-1/2})+X^{\frac{6c+1}{8}+\varepsilon}\nonumber\\
&\ll X^{\frac{3-2c}{2}}\sum\limits_{dt\leq z}M^{\frac{3}{2}}
+X^{1/2}\sum\limits_{dt\leq z}M^{1/2}+X^{\frac{6c+1}{8}+\varepsilon}\nonumber\\
&\ll X^{\frac{9-2c}{8}+\varepsilon}\sum\limits_{dt\leq z}(dt)^{-\frac{3}{2}}
+X^{\frac{2c+3}{8}+\varepsilon}\sum\limits_{dt\leq z}(dt)^{-\frac{1}{2}}
+X^{\frac{6c+1}{8}+\varepsilon}\nonumber\\
&=X^{\frac{9-2c}{8}+\varepsilon}\sum\limits_{n\leq z}\tau(n)n^{-\frac{3}{2}}
+X^{\frac{2c+3}{8}+\varepsilon}\sum\limits_{n\leq z}\tau(n)n^{-\frac{1}{2}}
+X^{\frac{6c+1}{8}+\varepsilon}\nonumber\\
&\ll X^{\frac{9-2c}{8}+\varepsilon}\sum\limits_{n\leq z}n^{-\frac{3}{2}}
+X^{\frac{2c+3}{8}+\varepsilon}\sum\limits_{n\leq z}n^{-\frac{1}{2}}
+X^{\frac{6c+1}{8}+\varepsilon}\nonumber\\
&\ll X^{\frac{2c+3}{8}+\varepsilon}z^{1/2}+X^{\frac{6c+1}{8}+\varepsilon}\nonumber\\
&\ll X^{\frac{6c+1}{8}+\varepsilon}\,.
\end{align}
Bearing in mind \eqref{Sc1est1}, \eqref{Sc3est2} and \eqref{Sc4est4} we obtain
\begin{equation}\label{Sc1est2}
S^{(1)}_c(X)=\frac{1}{2}\sigma X+\mathcal{O}\left(X^{\frac{6c+1}{8}+\varepsilon}\right)\,.
\end{equation}
\textbf{Estimation of} $\mathbf{S^{(2)}_c(X)}$

Using \eqref{Sc2} we write
\begin{equation}\label{Sc2est1}
S^{(2)}_c(X)\ll(\log X)^2\sum\limits_{D\leq d<2D}\sum\limits_{T\leq t<2T}
\sum\limits_{(X/2)^cd^{-2}<k\leq X^cd^{-2}\atop{kd^2+1\equiv0\,(t^2)}}1\,,
\end{equation}
where
\begin{equation}\label{DT}
\frac{1}{2}\leq D,T\leq\sqrt{X^c+1}\,,\quad DT\geq\frac{z}{4}\,.
\end{equation}
On the one hand \eqref{Sc2est1} and Lemma \ref{tauest} give us
\begin{align}\label{Sc2est2}
S^{(2)}_c(X)&\ll(\log X)^2\sum\limits_{T\leq t<2T}\sum\limits_{l\leq(X^c+1)T^{-2}}\sum\limits_{D\leq d<2D}
\sum\limits_{(X/2)^cd^{-2}<k\leq X^cd^{-2}\atop{kd^2=lt^2-1}}1\nonumber\\
&\ll(\log X)^2\sum\limits_{T\leq t<2T}\sum\limits_{l\leq(X^c+1)T^{-2}}\tau(lt^2-1)\nonumber\\
&\ll X^\varepsilon\sum\limits_{T\leq t<2T}\sum\limits_{l\leq(X^c+1)T^{-2}}1\nonumber\\
&\ll X^{c+\varepsilon}T^{-1}\,.
\end{align}
On the other hand \eqref{Sc2est1} and Lemma \ref{tauest} imply
\begin{align}\label{Sc2est3}
S^{(2)}_c(X)&\ll(\log X)^2\sum\limits_{D\leq d<2D}\sum\limits_{(X/2)^cd^{-2}<k\leq X^cd^{-2}}\sum\limits_{T\leq t<2T}
\sum\limits_{l\leq(X^c+1)T^{-2}\atop{kd^2+1=lt^2}}1\nonumber\\
&\ll(\log X)^2\sum\limits_{D\leq d<2D}\sum\limits_{k\leq X^cD^{-2}}\tau(kd^2+1)\nonumber\\
&\ll X^\varepsilon\sum\limits_{D\leq d<2D}\sum\limits_{k\leq X^cD^{-2}}1\nonumber\\
&\ll X^{c+\varepsilon}D^{-1}\,.
\end{align}
By \eqref{DT} -- \eqref{Sc2est3} it follows
\begin{equation}\label{Sc2est4}
S^{(2)}_c(X)\ll X^{c+\varepsilon}z^{-\frac{1}{2}}\,.
\end{equation}
Using \eqref{z} and \eqref{Sc2est4} we find
\begin{equation}\label{Sc2est5}
S^{(2)}_c(X)\ll X^{\frac{6c+1}{8}+\varepsilon}\,.
\end{equation}
Bearing in mind \eqref{Scest1}, \eqref{Sc1est2} and \eqref{Sc2est5} we obtain
\begin{equation}\label{Scest2}
S_c(X)=\frac{1}{2}\sigma X+\mathcal{O}\left(X^{\frac{6c+1}{8}+\varepsilon}\right)\,,
\end{equation}
where $\sigma$ is denoted by \eqref{sigma}.

The Theorem is proved.

\vskip20pt
\footnotesize
\begin{flushleft}
S. I. Dimitrov\\
Faculty of Applied Mathematics and Informatics\\
Technical University of Sofia \\
8, St.Kliment Ohridski Blvd. \\
1756 Sofia, BULGARIA\\
e-mail: sdimitrov@tu-sofia.bg\\
\end{flushleft}
\end{document}